\def\C{\mathbb{C}}
\def\N{\mathbb{N}}
\def\Z{\mathbb{Z}}
\def\Q{\mathbb{Q}} 
\DeclareMathOperator{\lcm}{lcm}
\DeclareMathOperator{\supp}{supp}
\DeclareMathOperator{\ord}{ord}
\newtheorem{Th}{Theorem}[section]
\newtheorem{Lm}[Th]{Lemma}
\newtheorem{Pro}[Th]{Proposition}
\newtheorem{Co}[Th]{Corollary}
\newtheorem{Ep}[Th]{Example}
\newtheorem{rk}[Th]{Remark}
\newtheorem{qu}[Th]{Question}
\numberwithin{equation}{section}
\begin{document}

\setcounter{page}{1}

\title{On finite factorization Puiseux algebras}

\author{M. Benelmekki}

\address{Department of Mathematics, Facult\'e des Sciences et Techniques, 
Beni Mellal University, P.O. Box 523, Beni Mellal, Morocco}
\email{ med.benelmekki@gmail.com}

\subjclass[2020]{13A05, 13F15, 13F25, 13G05}
\keywords{FFD; finite factorization property; Puiseux algebra;  cyclotomic polynomial}
\date{\today}

\begin{abstract}

An integral domain $D$ is called a finite factorization domain (FFD) if every nonzero nonunit element of $D$ has only  finitely many non-associate divisors. In 1998, for an integral
domain $D$ and a  cancellative torsion-free monoid $S$ such that each nonzero
element of its quotient group is of type $(0,0, \ldots)$,  Kim proved that the monoid domain $D[S]$ is an FFD if and only if $D$ is an FFD and $S$ is an FFM. However,  it is still open whether a monoid algebra $K[S]$ is an FFD provided that $S$ is a reduced FFM. In this paper, we show that a Puiseux algebra $K[S]$ is an FFD if and only if $S$ is an FFM, when $K$ is a finitely generated field of characteristic $0$. This would provide a large  class of one-dimensional monoid algebras with finite factorization property. We also prove that every generalized cyclotomic polynomial has the finite factorization property in $K[S]$ where $S$ is a reduced FFM and $K$ is an arbitrary field of characteristic $0$.  
\end{abstract}

\maketitle

\section{Introduction}

 Let $D$ be an integral domain and let $S$ be a commutative cancellative torsion-free monoid, written additively. Denote by $\langle S\rangle$ the quotient group of
$S$ and $(S, <)$ a total order on $S$. The monoid domain of $S$ over $D$ is defined by $$D[S]=\lbrace \sum_i a_iX^{s_i}|\,a_i\in D \mbox{ and } s_i\in S\rbrace.$$
 Every nonzero element $f\in D[S]$ has a unique representation in the~form $$ f= a_{1}X^{s_1}+a_{2}X^{s_2}+\cdots+a_{n}X^{s_n},$$ where $n\in \mathbb{Z}_+$, $a_{i}\in D\setminus \{0\} $ and ${s_i}\in S$ $(i=1,...,n)$ such that $s_1<s_2<\cdots<s_n$. The subset $\supp f:=\lbrace s_1,\ldots,s_n \rbrace$ of $S$ is called the support of $f$. We let $\ord f:=s_1$ and $\deg f:=s_n$ denote the order and degree of $f$, respectively.  These domains emerge as a significant generalization of classical polynomial rings: if $S=\mathbb{Z}_+^r$ for some positive integer $r$, then $D[S]=D[X_1,...,X_r]$
 and $D[\langle S\rangle]=D[X_1^{\pm 1},...,X_r^{\pm 1}]$. Academic interest in the broader field of commutative semigroup rings, which notably comprises monoid domains, expanded significantly from the 1970s.  This increased interest was largely thanks to the important early work of mathematicians like Gilmer and Matsuda. Gilmer's significant books, especially his comprehensive volume on commutative semigroup rings \cite{G84}, truly encouraged a lot of new research into  the algebraic and factorization properties of these domains, motivated by their role as natural generalizations of polynomial rings and their profound connections to areas like algebraic geometry and homological algebra.

Let $S$ be a multiplicative monoid and let $U(S)$ denote the set of units of $S$. If $U(S)$ is trivial, then $S$ is said to be reduced. For $s,t\in S$,  we say that $s$ divides $t$ if there exists $r\in S$ such that $t=rs$. If $t=us$ for some $u\in U(S)$, we say that $s$ and $t$  are associate. By an irreducible element (or atom) of $S$ we mean a nonunit $s\in S$ such that $s=tr$, where  $t,r\in S$, implies that $t$ or $r$ is a unit of $S$. We say that $S$ is atomic if each nonunit of $S$ is a product of a finite number of irreducible elements of $S$. The  monoid $S$ is a bounded factorization monoid (BFM) if $S$ is atomic and for each nonzero nonunit element of $S$ there is a bound on the length of its factorizations into products of irreducible elements. We say that $S$ is an idf-monoid if each nonzero nonunit element of $S$ has only a finite number of non associate irreducible divisors. An atomic idf-monoid $S$ is called a finite factorization monoid (FFM); equivalently, each nonzero nonunit element of $S$ has only a finite number of nonassociate divisors in $S$. For an integral domain $D$, the set of units is denoted by  $U(D)$. If the multiplicative monoid $D\setminus \{0\}$, of nonzero elements of $D$,  is atomic (resp., BFM, FFM, idf-monoid), $D$ is called atomic (resp., BFD, FFD, idf-domain). The concepts of BFD and FFD are due to Anderson, Anderson, and Zafrullah \cite{AA90}, while the class of idf-domains was introduced by Grams and Warner in \cite{GR75}. For more on these factorization properties see \cite{AA90,AA92}. A survey on the most relevant results about bounded and finite factorization domains can be found in \cite{AG20}.

 In factorization theory, a classical problem in the study of monoid domains concerns determining the conditions under which the monoid domain $D[S]$ satisfies a certain factorization property, a question that continues to be intensively researched (see for instance, \cite{BE25,BEN,Got22,GL,GR25}). For an integral domain $D$ and a cancellative torsion-free  monoid $S$ such that each nonzero element of its quotient group is of type $(0,0, \ldots)$, Kim proved that $D[S]$ is an FFD (resp., a BFD) if and only if $D$ is an FFD (resp., a BFD) and $S$ is an FFM (resp., a BFM), see \cite[Propositions 3.15 and 3.25]{Kim98}. For a general  cancellative torsion-free  monoid $S$, 
 Anderson and Juettin, in \cite{AJ}, proved that that $D[S]$ is a BFD  if and only if  $D$ is an BFD and $S$ is an BFM, where they assume that $S$ is reduced,  without requiring that every nonzero element of its quotient group is of type $(0,0,\ldots)$. This question is considered also by Gotti for the case of monoid domains that consist of polynomial expressions with non-negative rational exponents over a field, called Puiseux algebras. He proved that, for a field $K$ and a strongly increasing monoid $M$ (i.e.,   its elements are the terms of an increasing sequence of rationals), the Puiseux algebra $K[M]$ is an FFD, see \cite[Proposition 4.10]{Got22}. However, a characterization remains unknown even for the class of Puiseux algebras, see \cite[Question 4.9]{Got22}.  
 
 The main purpose of this paper is to investigate the ascent of the finite factorization property (FF-property) to Puiseux algebras. In section 2, we define elements with symmetric support in a monoid domain and subsequently apply their properties to show that an important class of elements of $K[S]$ has the FF-property.  More precisely, for $0\ne s\in S$ and a polynomial $\Phi$ whose zeros are roots of unity over $K$, we show that the generalized cyclotomic polynomial $\Phi(X^s)$ satisfies the FF-property in $K[S]$, where $S$ is an arbitrary reduced FFM and $K$ is an arbitrary field  of characteristic $0$. In section 3, we prove that the Puiseux algebra $K[S]$ is an FFD if and only if $S$ is an FFM, where $K$ is a finitely generated field of characteristic $0$. This work will establish a significant class of one-dimensional monoid algebras possessing the FF-property. Moreover, this contribution can also be viewed as an initial step towards answering the following general question. 
 \begin{qu}
 Let $K$ be any field and let $S$ be a torsion-free cancellative monoid. Is the monoid algebra $K[S]$ an FFD provided that $S$ is a reduced FFM?
\end{qu}
  Throughout, $\mathbb{N}$, $\mathbb{Z}$, and $\mathbb{Q}$ denote the positive integers, the integers, and the rational numbers, respectively. The sets $\mathbb{Z}_+$ 
 and  $\mathbb{Q}_+$ will denote the monoid of non-negative, integers and rational numbers, respectively.  All monoids considered in this paper will be assumed to be commutative.  For $r=\frac{n}{d}\in \mathbb{Q}_+$, 
$(n,d)\in\mathbb{Z}_+\times\mathbb{N}$, reduced (i.e., $\gcd(n,d)=1$), we call the unique pair of integers $n$ and $d$, the numerator and the denominator of $r$ and denote them by $n(r)$ and $d(r)$, respectively. For a nonempty subset $A$ of $\mathbb{Q}_+$, we set $d(A)=\lbrace d(r) : r\in A\rbrace$; that is the set of the denominators of all (reduced) elements of $A$. All exponents  that occur in the representation of an element $f\in K[\mathbb{Q}_+]$, considered in this paper, will be assumed to be reduced.  
General references for any undefined terminology or notation are \cite{G72, G84, Sch}.

\section{Factorization of elements with symmetric support}
In this section, we show that any product of cyclotomic polynomials over a field of characteristic $0$, when considered as an element of a monoid algebra, satisfies the finite factorization property. Our approach is primarily guided by the properties of elements with symmetric support.

Let $D$ be an integral domain and let $S$ be a cancellative torsion-free monoid, written additively. We say that an element $f\in D[S]$ is of \textit{symmetric support} if for every $s\in \supp f$, $(\deg f+\ord f-s)\in \supp f$. In particular, monomials $aX^s$, where $0\neq a\in D$ and $ s\in S$, are of symmetric support in $D[S]$. Note that palindromic and antipalindromic polynomial in $D[X]$ are of symmetric support. 
 
 It is well known that the product of two palindromic or antipalindromic polynomials, with coefficients from an arbitrary field $K$,  is palindromic. Also, the product of a palindromic polynomial and an antipalindromic polynomial is antipalindromic. Thus, all such products are of symmetric support. Generally, the product of two elements of symmetric support is not so. For instance, consider the elements $f(X)=X+1$ and  $g(X)=X^2-X+2$ of $\Q[X]$. Clearly, $f$ and $g$ are of symmetric support in $\Q[X]$. However, their product $f(X)g(X)=X^3+X+2$ is not of symmetric support. 
 
 It is important to note that there are some special elements with symmetric support such that their product is also of symmetric support. For example, consider a cyclotomic polynomial $\Phi$ over $\Q$. Then $\Phi$ is of symmetric support since every cyclotomic polynomial over $\Q$ is either palindromic or antipalindromic. Note that $X-1$ is the only antipalindromic cyclotomic polynomial over $\Q$.  Therefore, any finite product of cyclotomic polynomials over $\Q$ is of symmetric support.
 
Our next objective is to extend the previous statement to a large class of fields.  We use the following terminology. Let $K$ be a field and let $K[X_1,\ldots,X_n]$ be the polynomial ring in $n$ variables over $K$. For $k \in \Z_+$, the $k$-th elementary symmetric polynomial, often denoted $e_k$,  defined as follows:
    $$e_k(X_1, \ldots, X_n) = \sum_{1 \le i_1 < i_2 < \ldots < i_k \le n} X_{i_1} X_{i_2} \ldots X_{i_k}.$$
By convention, $e_0 = 1$. One of the most important applications of elementary symmetric polynomials is their direct connection to the coefficients of a monic polynomial. If a monic polynomial $P(X)$ of degree $n$ has roots $a_1, a_2, \ldots, a_n$, then there is a relationship between its coefficients  and  the elementary symmetric polynomials of its roots as follows:
$$P(X) = \prod_{i=1}^n (X-a_i) =X^n +\sum_{k=0}^{n-1} (-1)^k e_ {n-k}(a_1, \ldots, a_n) X^{k}.$$
This identity is known as \textit{Vieta's Formulas}. When each $a_i\neq 0$, it is well known that for every $k=0,\ldots,n$,
 $$e_k(a_1, \ldots, a_n) = e_n(a_1, \ldots, a_n). e_{n-k}(a_1^{-1}, \ldots, a_n^{-1}).$$
In the next result, we will use of the following elementary lemma that is of interest in itself.
\begin{Lm}\label{L1}
Let $K$ be a field of characteristic $0$. Let $a_1, a_2, \ldots, a_n$ be $n$ roots of unity  over $K$, not necessarily distinct. If $e_k(a_1, \ldots, a_n) = 0$ for some $0 \le k \le n$, then $e_{n-k}(a_1, \ldots, a_n) = 0$.
\end{Lm}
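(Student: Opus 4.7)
The plan is to exploit the Vieta-style identity displayed immediately before the lemma. Since each $a_i$ is a root of unity, each $a_i$ is nonzero, so $e_n(a_1,\ldots,a_n) = a_1\cdots a_n$ is a nonzero element of $\overline{K}$. The identity
$$ e_k(a_1,\ldots,a_n) = e_n(a_1,\ldots,a_n)\cdot e_{n-k}(a_1^{-1},\ldots,a_n^{-1})$$
then turns the hypothesis $e_k(a_1,\ldots,a_n)=0$ into the equality $e_{n-k}(a_1^{-1},\ldots,a_n^{-1})=0$. So the whole problem reduces to showing that passing from $a_i$ to $a_i^{-1}$ inside an elementary symmetric polynomial preserves vanishing.

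For this I would use a Galois-theoretic argument that is available precisely because $\mathrm{char}(K)=0$. Let $N$ be the least common multiple of the orders of $a_1,\ldots,a_n$, so that $a_1,\ldots,a_n$ all lie in the cyclotomic field $\mathbb{Q}(\zeta_N) \subseteq \overline{K}$. The Galois group $\mathrm{Gal}(\mathbb{Q}(\zeta_N)/\mathbb{Q})$ is canonically isomorphic to $(\mathbb{Z}/N\mathbb{Z})^{\ast}$, and the class of $-1$ always belongs to this group; it induces an automorphism $\tau$ of $\mathbb{Q}(\zeta_N)$ that inverts every root of unity, in particular $\tau(a_i) = a_i^{-1}$ for each $i$. (Alternatively, one may fix an embedding $\mathbb{Q}(a_1,\ldots,a_n)\hookrightarrow \mathbb{C}$ and take $\tau$ to be complex conjugation, which inverts every element of modulus one; this gives the same result and makes no use of the ambient field $K$.)

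Since $\tau$ fixes $\mathbb{Z}$ and $e_{n-k}$ is a polynomial with integer coefficients in its arguments, $\tau$ commutes with the evaluation:
$$ \tau\bigl(e_{n-k}(a_1,\ldots,a_n)\bigr) \;=\; e_{n-k}\bigl(\tau(a_1),\ldots,\tau(a_n)\bigr) \;=\; e_{n-k}(a_1^{-1},\ldots,a_n^{-1}) \;=\; 0.$$
As $\tau$ is an automorphism and hence injective, this forces $e_{n-k}(a_1,\ldots,a_n)=0$, which is exactly the desired conclusion. The only genuinely delicate point is the construction of $\tau$, since for a general field $K$ the extension $K(\zeta_N)/K$ might not contain a nontrivial involution; but the argument only needs the automorphism of the abstract cyclotomic field $\mathbb{Q}(\zeta_N)$, which always exists in characteristic zero, and the characteristic zero hypothesis is exactly what guarantees that distinct orders produce distinct primitive roots so that ordinary cyclotomic theory applies.
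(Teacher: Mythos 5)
Your proof is correct and follows essentially the same route as the paper: both start from the identity $e_k(a_1,\ldots,a_n) = e_n(a_1,\ldots,a_n)\cdot e_{n-k}(a_1^{-1},\ldots,a_n^{-1})$ together with $e_n \neq 0$, and both then realize the inversion $a_i \mapsto a_i^{-1}$ as a field automorphism fixing $\mathbb{Q}$, so that the vanishing of $e_{n-k}$ is preserved. The paper uses the complex-conjugation form of that automorphism after embedding $\overline{K}$ into $\mathbb{C}$ (exactly your parenthetical alternative); your primary Galois-theoretic phrasing via $-1 \in (\mathbb{Z}/N\mathbb{Z})^{\ast}$ is just a more intrinsic packaging of the same idea, with the small bonus of making transparent why the lemma fails in positive characteristic, as in the paper's $\mathbb{F}_2$ example, where inversion need not lie in the Frobenius-generated Galois group.
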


\begin{proof}
Let $a_1, a_2, \ldots, a_n$ be $n$ roots of unity in an algebraic closure $\overline{K}$ of $K$. Since $K$ has characteristic 0, we can embed its algebraic closure $\overline{K}$ into the field of complex numbers $\C$. Thus, we can consider these roots to be complex numbers.

Let  $e_j = e_j(a_1, \ldots, a_n)$, for every $j=0,\ldots,n$. Since each $a_i$ is a root of unity, $a_i \ne 0$ for every $i=1,\ldots,n$. Consequently, their product $e_n = \prod_{i=1}^n a_i$ is also nonzero.
 Let $\overline{a_i}$ denotes the complex conjugate of $a_i$. Since $|a_i|=1$, $a_i \overline{a_i} = |a_i|^2 = 1$, and so $a_i^{-1} = \overline{a_i}$ for all $i=1, \ldots, n$. On the other hand, we have 
 $$e_k(a_1, \ldots, a_n) = e_n(a_1, \ldots, a_n). e_{n-k}(a_1^{-1}, \ldots, a_n^{-1}).$$
 Thus 
$$ e_{k} = e_n.\overline{e_{n-k}}, $$
where $\overline{e_j} = e_j(\bar{a_1}, \ldots, \bar{a_n}) = \overline{e_j(a_1, \ldots, a_n)}$, for every $j=0,\ldots,n$. Therefore, if $e_k = 0$, then $\overline{e_{n-k}}=0$ since $e_n \neq 0$. Consequently,  $e_{n-k} = 0$. 
\end{proof}
 Following \cite{Sch}, a cyclotomic polynomial $\Phi$ over a field $K$ is a monic univariable polynomial irreducible over $K$ whose zeros are roots of unity (i.e., $\Phi$ is an irreducible divisor of $X^\delta-1$ in $K[X]$ for some positive integer $\delta$). 
\begin{Pro} \label{p1}
Let $K$ be a field of characteristic $0$.  Then any finite product of cyclotomic polynomials is of symmetric support in $K[X]$. In particular, every cyclotomic polynomial over $K$ is of symmetric support.
\end{Pro}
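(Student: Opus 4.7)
The plan is to reduce the statement to \textbf{Vieta's formulas} together with Lemma \ref{L1}. Let $P = \Phi_1 \cdots \Phi_r$ be a finite product of cyclotomic polynomials over $K$. Working in a fixed algebraic closure $\overline{K}$, each $\Phi_j$ splits into linear factors whose roots are roots of unity (by definition of cyclotomic polynomial), so $P$ factors as
\[
P(X) = \prod_{i=1}^{n}(X - a_i),
\]
where $n = \deg P$ and $a_1, \ldots, a_n$ are roots of unity in $\overline{K}$, listed with multiplicity.

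By Vieta's formulas I would expand
\[
P(X) = \sum_{k=0}^{n} (-1)^{n-k}\, e_{n-k}(a_1, \ldots, a_n)\, X^{k},
\]
so that the coefficient of $X^{k}$ vanishes if and only if $e_{n-k}(a_1, \ldots, a_n) = 0$. Since roots of unity are nonzero, the constant term of $P$ is $(-1)^{n} a_1 \cdots a_n \neq 0$, and therefore $\ord P = 0$ and $\deg P + \ord P = n$. Hence, describing $\supp P$ as a subset of $\{0, 1, \ldots, n\}$, we have $s \in \supp P$ precisely when $e_{n-s}(a_1, \ldots, a_n) \neq 0$.

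Now I would apply Lemma \ref{L1} to the $n$-tuple $(a_1, \ldots, a_n)$ of roots of unity: for every $0 \le k \le n$, $e_k(a_1, \ldots, a_n) = 0$ if and only if $e_{n-k}(a_1, \ldots, a_n) = 0$ (the reverse implication follows by swapping $k$ with $n - k$). Setting $k = s$, this yields
\[
s \in \supp P \iff e_{n-s}(a_1, \ldots, a_n) \neq 0 \iff e_{s}(a_1, \ldots, a_n) \neq 0 \iff n - s \in \supp P.
\]
Since $\deg P + \ord P - s = n - s$, this is exactly the definition of symmetric support for $P$. The ``in particular'' clause follows by taking $r = 1$. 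The only subtle point, and the only one that uses $\mathrm{char}\, K = 0$, is the invocation of Lemma \ref{L1}; the rest is pure Vieta bookkeeping, so I do not expect any real obstacle beyond keeping the index $n-k$ straight throughout.
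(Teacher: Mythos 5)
Your proposal is correct and follows essentially the same route as the paper's proof: factor the product over $\overline{K}$ into linear factors with roots of unity, apply Vieta's formulas to identify the coefficients with signed elementary symmetric polynomials, note $\ord P = 0$ since roots of unity are nonzero, and invoke Lemma \ref{L1} to transfer nonvanishing from $e_{n-s}$ to $e_s$. The index bookkeeping is handled correctly, so nothing further is needed.
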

\begin{proof}
Consider cyclotomic polynomials $\Phi_1,\ldots\Phi_t\in K[X]$, and 
let $a_1, a_2, \ldots, a_n$ the multiset (elements can appear multiple times) of all the roots (of unity) of the polynomial $\Phi_i$ over $K$. Let $\Phi(X)=\prod_{i=1}^t \Phi_i(X)$. Note that $\Phi$ is a monic polynomial, of degree $\deg\Phi= n$ and $\ord \Phi =0$,  that has  $a_1, a_2, \ldots, a_n$ as roots. Then, by Vieta's formulas, $\Phi$ can be written as:
$$\Phi(X) = X^n - e_1 X^{n-1} + e_2 X^{n-2} - \cdots + (-1)^k e_k X^{n-k} + \cdots + (-1)^n e_n,$$
where $e_j = e_j(a_1, \ldots, a_n)$. Assume now that $s\in \supp \Phi$; that is, $s=n-k$ for some $0 \le k \le n$ and $e_{k}\neq 0$. Then, it follows from Lemma \ref{L1} that $e_{n-k}\neq 0$. Thus, $k\in \supp \Phi$, and so $n-s\in \supp \Phi$.
\end{proof}

The next example shows that Proposition \ref{p1} does not holds, in general, for a field of positive characteristic.
\begin{Ep}
  Let $K = \mathbb{F}_2$, the finite field with two elements. Consider the polynomial $\Phi(X) = X^3+X^2+1$ over $\mathbb{F}_2$. This polynomial is irreducible over $\mathbb{F}_2$. Its roots, say $a_1, a_2, a_3$, are nonzero elements of $\mathbb{F}_2[X]/(X^3+X^2+1)$, which is isomorphic to $\mathbb{F}_8$.  Consequently, they are 7th roots of unity, and therefore $\Phi$ is a cyclotomic polynomial over $K$. However, $\Phi$ is not of symmetric support in $K[X]$. Note that $e_1(a_1, a_2, a_3)=1$ but $e_{3-1}(a_1, a_2, a_3)=0$. This also shows  that, for a field of positive characteristic, Lemma \ref{L1} does not holds in general.    
 \end{Ep} 
 
Let $D$ be an integral domain and let $S$ be a cancellative torsion-free  monoid. Following \cite{BEN}, an element  of the form $f(X^s)\in D[S]$, where $s\in S$ and $f(X)\in D[X]$, is called a \textit{generalized polynomial}. These elements play an important role in factorization theory in monoid domains, see \cite{BEN}.  
\begin{rk}\label{r2}
Let $D$ be an integral domain and let $S$ be a cancellative torsion-free  monoid. For $s\in S$ and  $f(X)\in D[X]$ of symmetric support, it is easy to see that the generalized polynomial $f(X^s)$ is of symmetric support in $D[S]$. In particular,  for every non-negative rational number $r$, $f(X^r)$ is  of symmetric support in $D[\Q_+]$. 
\end{rk}

 Let $D$ be an integral domain. Recall that a nonzero nonunit element of $D$ satisfies the \textit{FF-property} in $D$ if it has only a finite number of factorizations in $D$, up to associates. 
 
We now have the results needed to prove the main result of this section. The following theorem  prove that the class of generalized cyclotomic polynomials in $K[S]$ has the FF-property in $K[S]$.
\begin{Th}\label{T3}
Let $K$ be a field of characteristic $0$ and let $S$ be a cancellative torsion-free reduced monoid.  Let $0\neq s\in S$ and let $\Phi \in K[X]$ be a product of cyclotomic polynomials  over $K$. If $S$ is an FFM, then $\Phi(X^s)$ satisfies the FF-property in  $K[S]$. 
\end{Th}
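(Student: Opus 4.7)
The plan is to reduce to the algebraic closure $\overline{K}$ of $K$, use the FFM hypothesis to isolate a ``minimal root'' $r^* \in S$ of $s$, and bound the non-associate divisors of $\Phi(X^s)$ in $\overline{K}[S]$ via a finite combinatorial parametrization by roots of unity.

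By Proposition \ref{p1} and Remark \ref{r2}, $\Phi(X^s)$ is of symmetric support in $K[S]$; this records the structural property of $\Phi(X^s)$ underlying the argument. It then suffices to bound the non-associate divisors of $\Phi(X^s)$ in $\overline{K}[S]$: any factorization in $K[S]$ is also a factorization in $\overline{K}[S]$, and since $S$ is reduced so that $U(\overline{K}[S]) = \overline{K}^\times$, two elements of $K[S]$ that are associate in $\overline{K}[S]$ differ by a scalar in $\overline{K}^\times$ which must actually lie in $K^\times$ (compare any nonzero coefficient), hence they are also associate in $K[S]$.

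Over $\overline{K}$, write $\Phi(X^s) = \prod_{i=1}^{n}(X^s - a_i)$ with $n = \deg \Phi$ and each $a_i$ a root of unity. Define $R = \{r \in S : mr = s \text{ for some } m \in \N\}$. Every $r \in R$ is an additive divisor of $s$ in $S$ (since $s = r + (m-1)r \in r + S$), so $R$ is finite by the FFM hypothesis. Let $r^* = \min R$ and $m^* = s/r^*$. Each factor refines in $\overline{K}[S]$ as $X^s - a_i = \prod_{j=1}^{m^*}(X^{r^*} - b_{ij})$, where the $b_{ij}$ are the $m^*$-th roots of $a_i$, all of which are themselves roots of unity. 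Thus $\Phi(X^s) = \prod_{i,j}(X^{r^*} - b_{ij})$ in $\overline{K}[S]$, a finite product of $nm^*$ factors.

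For any divisor $g \in \overline{K}[S]$ of $\Phi(X^s)$, $\ord g = 0$ (since $S$ is reduced) and $\deg g$ is an additive divisor of $ns$, taking one of finitely many values by the FFM hypothesis. The main obstacle is the remaining step: for each fixed degree, bounding the number of non-associate monic divisors by parametrizing them via finite combinatorial data drawn from the root set $\{b_{ij}\}$, together with subsets arising from coarser-level factorizations at the other elements $r \in R \setminus \{r^*\}$ (since, for instance, the divisors $X^3 \pm 1$ of $X^6 - 1$ in $\overline{K}[\langle 2,3\rangle]$ show that subsets of $\{X^{r^*} - b_{ij}\}$ alone need not capture every divisor). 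The minimality of $r^*$, which forces $r^*/k \notin S$ for all $k > 1$, is central to ruling out finer refinements and to keeping this parametrization finite. Once that finite parametrization is rigorously established, it yields the FF-property for $\Phi(X^s)$ in $\overline{K}[S]$ and hence in $K[S]$.
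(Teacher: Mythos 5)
There is a genuine gap, and you have in fact flagged it yourself: the ``remaining step'' you describe as the main obstacle is the entire content of the theorem, and the finite parametrization you hope for is not established. Two specific problems. First, a divisor $g$ of $\Phi(X^s)$ in $\overline{K}[S]$ need not be a product of binomials $X^{r}-b$ with $r\in R$: as your own example $X^3\pm 1\mid X^6-1$ shows, and more seriously, the support of $g$ can a priori contain arbitrary elements of $S$, not just multiples of elements of $R$. The set $R=\{r\in S: mr=s\}$ controls the possible ``roots of $s$'' but says nothing about where $\supp g$ lives; factorizations can occur at ever finer levels (in $\overline{K}[X^{1/n}]$ for unbounded $n$ when $S$ is a Puiseux monoid, say), and nothing in your argument rules this out. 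The minimality of $r^*$ does not help here, because the relevant refinements are not indexed by $R$. Second, your setup implicitly treats $S$ as a submonoid of $\Q_+$ (writing $m^*=s/r^*$ and $r^*/k\notin S$), whereas the theorem is stated for an arbitrary cancellative torsion-free reduced FFM.

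The idea that closes this gap in the paper is the one you state at the outset but never use: symmetric support --- not of $\Phi(X^s)$ itself, but of its \emph{divisors}. If $\Phi(X^s)=\phi\psi$ in $K[S]$, then (via the structure of generalized polynomials and the fact that the roots involved are roots of unity) $\phi$ and $\psi$ are themselves of symmetric support with order $0$, by Proposition \ref{p1} and Remark \ref{r2}. Hence for every $t\in\supp\phi$ one has $\deg\phi-t\in\supp\phi\subset S$, so $t$ divides $\deg\phi$ in $S$; consequently every element of $\supp\phi\cup\supp\psi$ divides $\deg\phi+\deg\psi=\deg\Phi(X^s)$ in $S$. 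Since $S$ is a reduced FFM, this confines the supports of \emph{all} divisors to a single finite subset $A$ of $S$, so all divisors lie in $K[M]$ for the finitely generated monoid $M$ generated by $A$, and $K[M]$ is an FFD. Without some argument of this kind that uniformly bounds the supports of the divisors, your reduction to $\overline{K}$ and the factorization over $r^*$ cannot be completed into a proof.
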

\begin{proof}
 Let $0\neq s\in S$ and let $\Phi \in K[X]$ be a product of cyclotomic polynomials  over $K$. Without loss of generality, we may suppose that $s>0$. Assume that there exists an infinite set $\{\phi_i\}_{i\in I}$ of nonassociate divisors of $\Phi(X^s)$ in $K[S]$. Then, for every $i\in I$, there exists $\psi_i\in K[S]$ such that 
$$\Phi(X^s)=\phi_i(X)\psi_i(X).$$
It should be noted that $\phi_i$ and $\psi_i$ are not monomials and $\ord \phi_i=\ord \psi_i=0$.  Since $S$ is an FFM, we may suppose that each $\phi_i$ (resp., $\psi_i$) have the same  degree, say $d_1$ (resp., $d_2$). Also,  note that $\phi_i$ and $\psi_i$ are both of symmetric support in $K[S]$, for every $i\in I$. To see this, fix $i\in I$ and let $G$ be the quotient group of $S$.  By \cite[Proposition 3.3]{BEN}, there exist a positive integer $n_i$ and $\alpha_i\in G$ such that $s=n_i\alpha_i$, $\phi_i=\Phi_i(X^{\alpha_i})$, and $\psi_i=\Psi_i(X^{\alpha_i})$, where $\Phi_i,\Psi_i \in K[X]$ with $\Phi_i(0) \neq 0$.  
  Then, $\Phi(X^{n_i})=\Phi_i(X)\Psi_i(X)$. 
Hence, each root of either $\Phi_i$ or $\Psi_i$ is  a root of unity over $K$. Thus, it follows from  Proposition \ref{p1} and Remark \ref{r2} that $\phi_i$ and $\psi_i$ are of symmetric support in $K[G]$, and hence in $K[S]$.  This implies that there exists a finite subset $A$ of $S$ such that,  for every $i\in I$, 
 $$\supp \phi_i \cup \supp \psi_i \subseteq A.$$  Indeed, let $i\in I$, $s_i\in \supp \phi_i$, and  $t_i\in \supp \psi_i$. Then, $d_1-s_i\in \supp \phi_i$.  Since $\supp \phi_i\subset S$ and $d_1=(d_1-s_i)+s_i$,   $s_i$ divides  $d_1$ in $S$. By a similar argument, $t_i$ divides  $d_2$ in $S$. Therefore, the set $$\bigcup_{i\in I}(\supp \phi_i \cup \supp \psi_i)$$ consists of  elements of $S$ that divide $d_1+d_2$ in $S$. Since $S$ is a reduced FFM, it must be finite.

Consider now the monoid $M$ generated by the set $A$, and let $K[M]$ the monoid algebra of $M$ over $K$. Clearly $M$ is finitely generated and $U(K[M])=U(K[S])$. As a consequence, $\{\phi_i\}_{i\in I}$ contains infinitely many nonassociate divisors of $\Phi$ in $K[M]$. This is a contradiction since $K[M]$ is an FFD  \cite[Corollary 2.29]{AG20}.
\end{proof}
As a consequence of Theorem \ref{T3}, we have the following corollary.
 \begin{Co}\label{C3}
Let $K$ be a field of characteristic $0$ and let $S$ be a Puiseux monoid containing $1$. Assume that $S$ is an FFM. Then any product $\Phi \in K[X]$ of cyclotomic polynomials satisfies the FF-property in  $K[S]$. 
\end{Co}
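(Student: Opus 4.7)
The plan is to derive this corollary as a direct application of Theorem \ref{T3} with $s = 1$. First, I would verify the monoid hypotheses: any Puiseux monoid $S$ is, by definition, a submonoid of $(\Q_+,+)$, and since $\Q_+$ embeds into the abelian group $(\Q,+)$, the monoid $S$ inherits both cancellativity and torsion-freeness. The unique unit of $\Q_+$ is $0$, so $U(S) = \{0\}$ and $S$ is reduced. Together with the hypothesis that $S$ is an FFM, this matches the standing assumptions on $S$ in Theorem \ref{T3}.

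Next, since $1 \in S$, I would take $s = 1$ as the nonzero element of $S$ required by Theorem \ref{T3}. The inclusion $1 \in S$ forces $\Z_+ \subseteq S$, so the polynomial $\Phi$, whose support lies in $\Z_+$, is a bona fide element of $K[S]$, and the associated generalized cyclotomic polynomial is $\Phi(X^1) = \Phi$. Applying Theorem \ref{T3} to the product $\Phi$ of cyclotomic polynomials then yields the FF-property of $\Phi$ in $K[S]$. There is no substantive obstacle here; the content of the corollary is entirely a verification that the class of Puiseux monoids containing $1$ fits the frame of Theorem \ref{T3}, together with the observation that $\Phi(X^{s})$ reduces to $\Phi$ when $s=1$.
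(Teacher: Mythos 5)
Your proposal is correct and matches exactly how the paper obtains this result: the corollary is stated as an immediate consequence of Theorem \ref{T3}, and the intended specialization is precisely $s=1$ together with the observations that a Puiseux monoid is cancellative, torsion-free, and reduced, and that $1\in S$ guarantees $\Phi=\Phi(X^1)\in K[S]$. No further comment is needed.
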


\section{Finite factorization Puiseux algebras}

Let $D$ be an integral domain and let $S$ be a cancellative torsion-free monoid such that each nonzero element of its quotient group is of type $(0,0, \ldots)$. Kim proved that the monoid domain $D[S]$ is an FFD if and only if $D$ is an FFD and $S$ is an FFM. This is remains valid if the term “BF” is substituted for “FF”, see \cite[Propositions 3.15 and 3.25]{Kim98}. For a  cancellative torsion-free monoid $S$, 
 Anderson and Juettin, in \cite{AJ}, proved that that $D[S]$ is a BFD  if and only if  $D$ is a BFD and $S$ is a BFM,  where they assume that $S$ is reduced, but without assuming that each nonzero element of its quotient group  is of type $(0,0,\ldots)$. However, this question is still open for FFD. Even for the class of Puiseux algebras, no such characterization is known, see \cite[Question 4.9]{Got22}.  The primary goal of this section is to address this question.

\begin{rk}\label{r1}
For an additive cancellative torsion-free monoid $S$ and a nonzero rational number $r$, the monoid domain $D[rS]$ is isomorphic to $D[S]$, where $D$ is an integral domain. This is because the mapping $\sigma_r: S \to rS$ defined by $\sigma_r(s) = rs$ is a monoid isomorphism. Therefore,  $\sigma_r$ give rise to an $D-$algebras isomorphism $\Psi_r: D[S] \to D[rS]$ defined by, for every $f = \sum_{i=1}^k d_i X^{s_i} \in D[S]$, 
$$\Psi_r(f) := \sum_{i=1}^k d_i X^{\sigma_r(s_i)} = \sum_{i=1}^k d_i X^{rs_i}.$$
 Consequently,  $D[S]$ and $D[rS]$ share all ring-theoretic properties. In particular, $D[S]$ is an FFD if and only if $D[rS]$ is an FFD.
\end{rk}

We are now in a position to prove the main result of this paper.  
\begin{Th}\label{L33}
Let $K$ be a finitely generated field of characteristic $0$ and let $S$ be a Puiseux monoid.  Then $K[S]$ is an FFD if and only if  $S$ is an FFM.
\end{Th}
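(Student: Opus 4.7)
The forward implication is routine: for any $s \in S \setminus \{0\}$, the FFD property applied to $X^s \in K[S]$ yields finitely many non-associate divisors, and since $S$ is reduced, the divisors of $s$ in $S$ embed injectively (via $s' \mapsto X^{s'}$) into the non-associate divisors of $X^s$, so $S$ is an FFM.

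For the converse, given a nonzero nonunit $f \in K[S]$, the plan is to exhibit a finite subset $A \subseteq S$ containing $\supp g$ for every non-associate divisor $g$ of $f$ in $K[S]$. Once this is achieved, every such divisor lies in $K[\langle A \rangle]$; the submonoid $\langle A \rangle \subseteq S$ is finitely generated by the finite set $A$, so $K[\langle A \rangle]$ is a finitely generated $K$-algebra and hence an FFD by \cite[Corollary 2.29]{AG20}. Since $K[\langle A \rangle]$ and $K[S]$ share the same unit group $K \setminus \{0\}$, the finiteness of non-associate divisors will transfer.

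To produce $A$, I would let $d$ be the lcm of denominators in $\supp f$, write $f = F(X^{1/d})$ with $F \in K[Y]$, and factor $F = F_c F_{nc}$ in $K[Y]$, where $F_c$ collects the $K$-irreducible factors whose roots are roots of unity (so $F_c$ is, up to a constant, a product of cyclotomic polynomials over $K$) and $F_{nc}$ consists of the remaining factors. For any divisor $g$ of $f$, \cite[Proposition 3.3]{BEN} yields $n \in \mathbb{N}$ and $G \in K[Y]$ with $g = G(X^{1/(nd)})$ and $F(Y^n) = G(Y) H(Y)$ in $K[Y]$. The root sets of $F_c(Y^n)$ (all roots of unity) and $F_{nc}(Y^n)$ (none of them roots of unity) are disjoint, making them coprime in the UFD $K[Y]$, so the factorization forces a unique split $G = G_c G_{nc}$, giving $g = g_c g_{nc}$ in $K[\mathbb{Q}_+]$ with $g_c = G_c(X^{1/(nd)})$ and $g_{nc} = G_{nc}(X^{1/(nd)})$. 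I would then bound the two pieces by complementary mechanisms. For the cyclotomic part: $G_c$ is itself a product of cyclotomic polynomials in $K[Y]$, so Proposition \ref{p1} and Remark \ref{r2} give that $g_c$ has symmetric support, and the symmetric-support argument from the proof of Theorem \ref{T3} confines $\supp g_c$ to the divisors of $\ord g_c + \deg g_c$ in $S$, a finite set by FFM. For the non-cyclotomic part: this is where the hypothesis that $K$ is finitely generated of characteristic $0$ enters, via Kummer theory; for each root $\beta$ of $F_{nc}$ (not a root of unity in $\overline{K}$), the radical tower $K(\beta^{1/n})$ stabilizes as $n$ grows in the sense that only finitely many $n$ contribute genuinely new $K$-rational factors to $Y^n - \beta$, so the set of non-associate $g_{nc}$ arising in $K[\mathbb{Q}_+]$ is finite.

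The hard part will be reconciling these two controls, since the factors $g_c, g_{nc}$ live a priori only in $K[\mathbb{Q}_+]$ and only their product $g$ is known to lie in $K[S]$; in principle, cancellations in the product $g_c g_{nc}$ could mask support points outside $S$. I expect to resolve this either by showing, from the coprimality of $G_c$ and $G_{nc}$ in the UFD $K[Y]$, that the canonical split places each factor individually into $K[S]$, or by bypassing the split and proving directly that the denominators of elements of $\supp g$ uniformly divide some $n_0 d$ for all divisors $g$ of $f$, whence $g \in K[S \cap \tfrac{1}{n_0 d} \mathbb{Z}_+]$---which, by Remark \ref{r1}, is the monoid algebra of a submonoid of $\mathbb{Z}_+$ and hence is finitely generated and an FFD.
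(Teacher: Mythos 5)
Your outline matches the paper's strategy closely: reduce to exponents in $\mathbb{Q}_+$ with a common denominator, split each divisor into a cyclotomic factor (controlled by symmetric support, as in Theorem \ref{T3}) and a non-cyclotomic factor (controlled by the finiteness of prime divisors in $K[\mathbb{Q}_+]$, which the paper imports wholesale from \cite[Theorem 3.11]{BE22} rather than redoing the Kummer-theoretic stabilization you sketch). But the step you yourself flag as ``the hard part'' is a genuine gap, and it is exactly the technical core of the paper's argument. Your claim that the symmetric-support argument from Theorem \ref{T3} confines $\supp g_c$ to the divisors of $\ord g_c + \deg g_c$ in $S$ does not go through, because that argument requires $\supp g_c \subseteq S$, whereas a priori only the product $g = g_c g_{nc}$ is known to lie in $K[S]$. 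Neither of your proposed fixes is supplied: coprimality of $G_c$ and $G_{nc}$ in $K[Y]$ does not place each factor individually in $K[S]$, and a uniform bound on the denominators over all divisors $g$ is precisely the statement that has to be proved.

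The paper closes this gap as follows. After clearing denominators, the non-cyclotomic factor $g_0$ is the same polynomial with support in $\mathbb{Z}_+$ for infinitely many of the divisors $f_i$, while the cyclotomic factor $\phi_i$ has symmetric support. Writing $\gamma(i)$ for the smallest non-integer element of $\supp \phi_i$, and $n_1, n_N$ for the extreme exponents of $g_0$, the exponents $n_1+\gamma(i)$ and $n_N+d'-\gamma(i)$ are the extreme non-integer exponents of the product and so cannot be cancelled; hence they lie in $\supp f_i(X^m)\subseteq S$ and sum to the \emph{fixed} element $n_1+n_N+d'$ of $S$. The FFM hypothesis then forces $\gamma(i)$ into a finite set, and an iteration (substituting $X\mapsto X^q$ to expose the next-smallest non-integer support point) bounds the entire non-integer part of $\supp\phi_i$. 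Only then do the supports of the $f_i$ fall into finitely many possibilities, and the problem drops into the UFD $K[X]$. Without this extremality/no-cancellation argument, or an equivalent substitute, your proof does not close.
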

\begin{proof}
The direct implication is obvious. To prove the reverse implication, suppose that $S$ is an FFM. By virtue of Remark \ref{r1}, we may assume that $1\in S$. 

First, let us prove that every $f\in K[X]$ satisfies the FF-property in $K[S]$. If $f$ is a monomial then the result is immediate since $S$ is an FFM.  Thus,  let $f$ be a nonzero nonunit element of $K[X]$ that is not a monomial, and let $\{f_i\}_{i\in I}$ be an infinite set of nonassociate divisors of $f$ in $K[S]$. Since $S$ is an FFM, we may suppose that each $f_i$ is not a monomial and that they have the same order and degree, say $s$ and $d$, respectively. Then, for every $i\in I$, there exists $h_i\in K[S]$ such that 
\begin{equation}\label{E1}
f(X)=f_i(X)h_i(X).
\end{equation}
 By   \cite[Theorem 3.11]{BE22}, $f$ has a factorization in $K[\Q_+]$ of the following form: 
 \begin{equation}\label{E2}
 f=cX^{r}\left(\prod\limits_{j=1}^{\lambda}\Phi_{j}^{e_j}(X)\right)\left( \prod\limits_{j=1}^{\rho} \varphi_j^{l_j}\right),
 \end{equation}
where \begin{itemize}
\item  $r=\ord f \in \mathbb{Z}_+$ and $c\in K$.
\item $\lambda,t, e_j,l_j\in \mathbb{Z}_+$.
\item $\Phi_{j}\in K[X]$ a cyclotomic polynomial that has no irreducible divisors in $K[\mathbb{Q}_+]$.
\item The $\varphi_j$'s are irreducible (prime) in $K[\mathbb{Q}_+]$ and uniquely determined, up to associates.
\end{itemize} 
Let $m=\lcm \lbrace d\left(  \left(\bigcup _{j=1}^{t}\supp \varphi_j\right) \cup\{s,d\} \right) \rbrace $. Then (\ref{E2}) implies that
$$f(X^m)=c\Phi(X) g(X),$$
where 
\begin{itemize}
\item  $\Phi(X):=\prod\limits_{j=1}^{\lambda}\Phi_{j}^{e_j}(X^m) \in K[X]$. 
\item $g(X):=X^{rm}\prod\limits_{j=1}^{\tau} g_j^{l_j}(X)$ with $g_j(X)=\varphi_j(X^m)\in K[X]$ and $\gcd(g_j,X^\delta-1)=1$, for every positive integer $\delta$. Note that the $g_i$'s are primes in  $K[\mathbb{Q}_+]$.  
\end{itemize} 
Then (\ref{E1}) become $f(X^m)=f_i(X^m)h_i(X^m) $. Since $g_1,...,g_\tau$ are the only irreducible (prime) divisors of $f(X^m)$ of $f$ in $K[\Q_+]$, there exist an infinite subset $J$ of $I$ and   $l_{0,1}, \dots, l_{0,t} \in  \Z_+$ such that  $\prod\limits_{j=1}^{\tau} g_j^{l_{0,j} }(X)$ divides $f_i(X^m)$ in $K[\Q_+]$,  for every $i \in J$. By virtue of \cite[Theorem 3.11]{BE22}, for every $i\in J$, we have 
\begin{equation}\label{E3}
f_i(X^m)=c_i \phi_i(X)g_0(X),
\end{equation}
where $c_i\in K$, $\phi_i \in K[\Q_+] $, and $g_0(X):= X^{ms}\prod\limits_{j=1}^{\tau} g_j^{l_{0,j} }(X)\in K[X]$. Note that each $\phi_i$ has order $0$ and is of symmetric support since it is a finite product of element of the form $\Phi_i(X^\frac{1}{\mu_i})$ where $\mu_i$ is a positive integer and $\Phi_i\in K[X]$ is a product of cyclotomic polynomials, see Proposition \ref{p1} and Remark \ref{r2}. Also, note that there is infinitely many indices $i$ such that $\phi_i\neq 1$ and $ \supp \phi_i\not\subset \Z_+$. Moreover, the $\phi_i$'s have the same degree, say  $d'\in \N$. Now, we have two cases:

\textbf{Case 1}: $g_0\neq 1$. 

 Set $g_0=\sum_{j=1}^N a_j X^{n_j}$ where $a_j\in K\setminus\{0\}$, and $n_j\in \Z_+$ such that $n_1<n_2<\cdots<n_N$. For $i\in J$, we let $\gamma(i)$ denotes the smallest element of $(\supp \phi_{i})\setminus \Z_+$. Since $\phi_i$ is of symmetric support in  $K[\mathbb{Q}_+]$, $d'-\gamma(i) \in \supp \phi_i$. Note that $\gamma(i),(d'-\gamma(i)) \notin \Z_+$. Using the fact that $\supp g_0  \subset \Z_+$ and that $\{t\in \supp \phi_i | t < \gamma(i) \mbox{ or } t> d'-\gamma(i) \} \subset \Z_+ $,  it follows from the equation (\ref{E3}) that $(n_1+\gamma(i)),(n_N +d'-\gamma(i))\in \supp f_i(X^m)$. Then  $$n_1+n_N +d'=(n_1+\gamma(i))+(n_N +d'-\gamma(i)).$$ 
In particular, since $\supp f_i(X^m) \subset S$,  $n_1+\gamma(i)$ is a divisor of $n_1+n_N +d'$ in $S$ for every $i\in J$. Since $S$ is an FFM, there exist $\gamma_{1}=\dfrac{p}{q}\in \Q_+ \setminus \Z_+$ and an infinite subset $J_1$ of $J$ such that, for every $i\in J_1$, $\gamma(i)=\gamma_{1}$. Moreover, $$n_1+n_N +d'=(n_1+\gamma_1)+(n_N +d'-\gamma_1),$$ and hence $n_1+\gamma_1$ divides $n_1+n_N +d'$ in $S$. 
 
 Now, consider the substitution $X\rightarrow X^q$ in the equation (\ref{E3}). Thus, for every every $i\in J_1$, we have  $$f_i(X^{mq})=c_i \phi_i(X^q)g_0(X^q).$$
 Again, for $i\in J_1$, let $\gamma(i)$ be the smallest element of $(\supp \phi_{i}(X^q))\setminus \Z_+$. By using the same argument as above, we get $\gamma_{2}'\in \Q_+ \setminus \Z_+$ and an infinite subset $J_2$ of $J_1$ such that, for every $i\in J_2$, $\gamma(i)=\gamma_{2}'$. In addition, $$qn_1+qn_N +qd'=(qn_1+\gamma_2')+(qn_N +qd'-\gamma_2'),$$ 
 where $(qn_1+\gamma_2'),(n_N +d'-\gamma_2')\in \supp f_i(X^{qm})$. Hence $$n_1+n_N +d'=(n_1+\gamma_2)+(n_N +d'-\gamma_2),$$
  with $\gamma_{2}=q^{-1}\gamma_{2}'$. Since $\supp f_i(X^{qm})=\{qt : t\in\supp f_i(X^{m})\}$, it follows that $(n_1+\gamma_2),(n_N +d'-\gamma_2)\in \supp f_i(X^{m})\subset S$. That is,  $n_1+\gamma_2$ divides $n_1+n_N +d'$ in $S$. Note that $\gamma_1<\gamma_2$. 
 
Continuing in this manner, we get $\gamma_1,\ldots,\gamma_k \in \Q_+ \setminus \Z_+$, $\gamma_1<\cdots<\gamma_k$, and an infinite subset $J_k$ of $I$ such that, for every $i\in J_k$ and every $j=1,\ldots,k$, $\gamma_j\in \supp \phi_i$ and $n_1+\gamma_j\in \supp f_i(X^{m})$  divides $n_1+n_N +d'$ in $S$. Since $S$ is FFM and $n_1+\gamma_1<\cdots<n_1+\gamma_k$, $k$ is bounded by the (finite) number of divisors of $n_1+n_N +d'$ in $S$. Thus,  consider $k$ to be maximal. Then, for every $i\in J_k$, $\lcm\lbrace d(\supp \phi_{i}\setminus \Z_+)\rbrace=\lcm\lbrace d(\{ \gamma_1,\ldots,\gamma_k \})\rbrace$. Since $|\supp \phi_{i}\cap\, \Z_+|\leq d'+1$ for every $i\in J_k$, it follows that there exists infinitely many $\phi_{i}$'s with the same support. Hence,  there exists an infinite subset $J_0$ of $I$ such that the elements of $\{f_i\}_{i\in J_0}$ have the same support, say $A$. Set $m'=\lcm \lbrace d(A)  \rbrace $. Then, for every $i\in J_0$, the equation (\ref{E1}) implies that
$$f(X^{m'})=f_i(X^{m'})h_i(X^{m'}).$$
Since $f(X^{m'}), f_i(X^{m'})\in K[X]$, $h_i(X^{m'})$ is also an element of $K[X]$. But $K[X]$ is UFD, and therefore there exist $i_1,i_2\in J_0$ with $i_1\neq i_2$ such that $f_{i_1}(X^{m'})$ and $f_{i_2}(X^{m'})$ are assocaite in $K[X]$; that is, $f_{i_1}(X^{m'})=u f_{i_2}(X^{m'})$ for some $u\in K\setminus \{0\}$. Thus, $f_{i_1}= u f_{i_2}$ which is a contradiction. Consequently, $f$ has a finite number of nonassociate divisors in $K[S]$. 

\textbf{Case 2:} $g_0= 1$. 

In this case, we may suppose that $g=1$. Otherwise,  there will be an infinite subset $J$ of $I$ and $l_{1,1}, \dots, l_{1,t} \in  \Z_+$ such that  $\prod\limits_{j=1}^{\tau} g_j^{l_{1,j} }(X)$ divides $h_i(X^m)$ in $K[\Q_+]$ for every $i \in J$. Hence $g_1(X):=X^{mt}\prod\limits_{j=1}^{\tau} g_j^{l_{1,j} }(X)$, where $t=\ord h_i$,  divides $h_i(X^m)$ in  $K[\Q_+]$.  Since $g\neq 1$,  $g_1\neq 1$, and hence this leads us back to Case 1 (apply the same argument, as in Case 1, to the set $\{h_i\}_{i\in J}$). Therefore, $f$ is associate to a product of cyclotomic polynomials over $K$.  Thus,  $f(X)=c\prod\limits_{j=1}^{\lambda}\Phi_{j}^{e_j}(X)$. By Corollary \ref{C3}, $f$ satisfies the FF-property in $K[S]$. This completes the proof of Case 2.  

We are now ready to conclude our proof. Let $F$ be a nonzero nonunit element of $K[S]$ such that $F$ has an infinite set  $\{F_i\}_{i\in I}$  of nonassociate divisors in $K[S]$. Set $n=\lcm \lbrace d(\supp F) \rbrace $.  Since $F(X^n)\in K[X]$ satisfies the FF-property in $K[S]$,  there exists an infinite subset $J$ of $I$ such that the elements of $\{F_i(X^n)\}_{i\in J}$ are associate in $K[S]$. Hence, the elements of $\{F_i\}_{i\in J}$ are associate divisors of $F$ in $K[S]$, which is a contradiction.
\end{proof}

\begin{rk}
Let $K$ be a field of characteristic $0$ and let $S$ be a Puiseux monoid. Assume that $S$ is an FFM.  We remark that if $f\in K[S]$ such that the roots of the polynomial  $f(X^m)$, where $m=\lcm \lbrace d\left( \supp f \right) \rbrace $, are elements of $ \{0\}\cup\{\zeta\in \overline{K}: \zeta^n=1, \;n\in \N\}$, then it follows,  by a similar proof as in Case 1 from the previous proof,  that $f$ satisfies the FF-property in  $K[S]$. In particular, the product of a monomial and a generalized cyclotomic polynomial has the FF-property in $K[S]$.  This generalizes Corollary \ref{C3}.
\end{rk}

We conclude this paper with the following corollary.
\begin{Co}
Let $D$ be an integral domain with quotient field $K$ and let $S$ be a Puiseux monoid. Assume that $K$ is finitely generated of characteristic $0$. Then the monoid domain $D[S]$ is an FFD if and only if $D$ is an FFD and $S$ is an FFM.
\end{Co}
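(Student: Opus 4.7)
The plan is to deduce the Corollary from Theorem \ref{L33} via a leading-coefficient descent. For the forward direction, I would use that every Puiseux monoid is reduced, so $U(D[S])=U(D)$, and that the total order on $S$ forces any factorization $d=fg$ of an element $d\in D$ in $D[S]$ to have $\ord f+\ord g=\deg f+\deg g=0$, hence $f,g\in D$; likewise, any factorization of $X^s$ in $D[S]$ consists of monomials $uX^t$ with $u\in U(D)$. Thus non-associate divisors of $d\in D$ (resp.\ of $s\in S$) in $D$ (resp.\ $S$) biject with non-associate divisors of $d$ (resp.\ $X^s$) in $D[S]$, yielding the implication $D[S]$ FFD $\Rightarrow$ $D$ FFD and $S$ FFM.

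For the nontrivial direction, assume $D$ is an FFD and $S$ is an FFM. By Theorem \ref{L33}, $K[S]$ is an FFD. Let $f\in D[S]$ be nonzero and nonunit, and suppose for contradiction that there is an infinite family $\{g_i\}_{i\in I}$ of pairwise non-$D[S]$-associate divisors of $f$, writing $f=g_ih_i$ in $D[S]$. Since each $g_i$ is also a divisor of $f$ in $K[S]$, and $K[S]$ is an FFD with $U(K[S])=K^{*}$, I can pass to an infinite subfamily along which $g_i=u_ig_1$ for some $u_i\in K^{*}$.

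The main step is then to show that infinitely many $u_i$ actually lie in $U(D)$, which will contradict the pairwise non-associateness. Because $g_i$ is a $K^{*}$-multiple of $g_1$, both $\alpha:=\ord g_i$ and $\beta:=\ord h_i=\ord f-\alpha$ are independent of $i$. Let $c\in D\setminus\{0\}$ be the coefficient of $X^{\ord f}$ in $f$, and let $a_i,b_i\in D\setminus\{0\}$ be the coefficients of $X^{\alpha}$ in $g_i$ and of $X^{\beta}$ in $h_i$. Comparing coefficients at $X^{\ord f}$ in $f=g_ih_i$, and using that $S$ is totally ordered so $(\alpha,\beta)$ is the unique pair in $\supp g_i\times\supp h_i$ with sum $\ord f$, yields $c=a_ib_i$, so each $a_i$ divides $c$ in $D$. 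Since $D$ is an FFD, I can pass to a further infinite subfamily along which $a_i=v_ia_1$ with $v_i\in U(D)$. But $g_i=u_ig_1$ forces $u_ia_1=a_i=v_ia_1$, so $u_i=v_i\in U(D)=U(D[S])$, contradicting the assumption. The main obstacle I anticipate is avoiding GCD or UFD hypotheses on $D$: a valuation or content argument would be a natural temptation, but reading off a single coefficient identity at the order term suffices to pin down $u_i$, after which the FFD property of $D$ applied to the fixed scalar $c$ closes the argument.
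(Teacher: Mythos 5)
Your proof is correct, but it takes a more self-contained route than the paper, which disposes of this corollary in one line by citing Kim's Propositions 1.4 and 1.5 together with Theorem~\ref{L33}; those propositions supply precisely the transfer between $D[S]$ and $K[S]$ that you re-derive by hand. Your forward direction (order/degree bookkeeping showing that divisors of constants are constants and divisors of monomials are monomials, so $D[S]$ FFD forces $D$ FFD and $S$ FFM) and your reverse direction (first collapsing the infinite family of divisors into a single $K^{*}$-associate class via Theorem~\ref{L33}, then comparing coefficients at the order term to get $c=a_ib_i$ --- legitimate because the order on a Puiseux monoid is compatible with addition, so $(\alpha,\beta)$ is indeed the unique pair of support elements summing to $\ord f$ --- and finally applying the FFD hypothesis on $D$ to the fixed element $c$ to force $u_i\in U(D)$) are both sound. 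Two cosmetic remarks: when $f$ is a unit of $K[S]$ (equivalently $f\in D\setminus\{0\}$ with $f\in K^{*}$) or when $c\in U(D)$, the pigeonhole step degenerates, but the conclusion still holds because every divisor of a unit is a unit; and after passing to subfamilies the distinguished index should be some $i_0$ belonging to the subfamily rather than $1$. What your route buys is independence from the external reference --- the corollary then rests only on Theorem~\ref{L33} and elementary properties of the order term --- at the cost of essentially reproving Kim's general $D$-to-$K$ descent in the special case of Puiseux monoids.
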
 
\begin{proof}
Follows from \cite[Propositions 1.4 and 1.5]{Kim} and Theorem \ref{L33}.
\end{proof}

\end{document}